\newtheorem{theorem}{Theorem}[section]
\newtheorem{corollary}{Corollary}[section]
\title{A simple proof for the chromatic number \\of cyclic Latin squares of even order}
\author{Zahra Naghdabadi }
\affil{\small Department of Mathematical Sciences, \\ \small Sharif University of Technology, Tehran, Iran.}
\date{} 
\begin{document}
	
	\maketitle
	
	\begin{abstract}
		The chromatic number of a cyclic Latin square of order $2n$ is $2n+2$. The available proof for this statement includes a coloring that is rather lengthy. Here, we introduce a coloring of cyclic Latin squares of even order $2n$ (the Latin square graph of a cyclic group's Cayley table) with $2n+2$ colors using a simple method supported by a graphical presentation.\\
		
		\textbf{Key words}: graph coloring, chromatic number, Latin square graph.
		
		\textbf{AMS (MOS) Subject Classifications:} 05C15, 05B15.
	\end{abstract}

	\section{Introduction}

Let $T_{n}$ be the Cayley table of the cyclic group of order $n$. Then, $T_{n}$ is a Latin square with the following pattern (Figure \ref{fig:3a}). The $(i,j)$th entry of $T_{n}$ has label $i+j-2 \ \left( \text{mod} \ n\right) $. Here, $T_{n}$ is called a cyclic Latin square.

We construct the Latin square graph of $T_{n}$ as follows. Let $G=(V,E)$ be a graph whose vertices are cells of $T_{n}$ and two vertices are adjacent if they are in the same row, in the same column, or have the same labels.\\
$$V=\{(i,j)| 1 \leq i,j \leq n \}$$
$$ \, \, \{(i,j),(a,b)\} \in E \Longleftrightarrow i=a \ \text{or} \ j=b\ \text{or} \ i+j=a+b \ \left( \text{mod} \ n\right)  $$
Then, $\chi (T_{n})$ denotes the chromatic number of the cyclic Latin square graph obtained from $T_{n}$. The chromatic number of a graph is the minimum number of colors required to color the vertices of the graph in such a way that adjacent vertices have different colors.

The chromatic number of cyclic Latin square graphs was found in 2016 \cite{2}. Some bounds were found for chromatic number of general Latin square graphs in \cite{4}. Later in \cite{3}, these bounds were improved and the case for Cayley tables of abelian groups was completely solved.

However, in \cite{2} for proof of the chromatic number of cyclic Latin squares of even order, a coloring was introduced which was rather complicated. Here, we give a simple coloring of the cyclic Latin square of order $2n$ by $2n+2$ colors. 
	\section{Chromatic number of cyclic Latin squares}
	 The following Theorem for chromatic number of the Latin square graph of $T_{n}$ was proved in \cite{2} by a long argument for the even case.
	
	\begin{theorem} \label{th}
		$\chi(T_{n}) = n$ if $n$ is odd and $\chi(T_{n}) = n+2$ if $n$ is even. 
	\end{theorem}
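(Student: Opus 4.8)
The plan is to work directly with the three collinearity constraints that define $G$: writing $c\colon \ZZ_n\times\ZZ_n\to\{\text{colours}\}$, properness means that $c$ is injective along every row ($i$ fixed), every column ($j$ fixed), and every broken diagonal ($i+j$ fixed modulo $n$). First I would record the lower bound $\chi(T_n)\ge n$, which is immediate: any single row is a clique of size $n$, as are any column and any diagonal. A colour class of size exactly $n$ must meet every row, column and label once, i.e. it is a transversal of $T_n$; such a transversal would have label-sum equal both to $\sum_{k=0}^{n-1}k$ (its labels are all residues) and to $\sum_i i+\sum_j j=2\sum_{k=0}^{n-1}k$ (each label is row $+$ column). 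For even $n$ these are $n/2$ and $0$ modulo $n$, a contradiction, so $T_n$ has no transversal and every colour class then has size at most $n-1$. This is the structural reason the even case costs more than $n$ colours, whereas for odd $n$ the two sums agree and no obstruction appears.

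For odd $n$ I would exhibit the explicit $n$-colouring $c(i,j)=2i+j \bmod n$. It is injective along rows (coefficient $1$ in $j$), along columns (coefficient $2$ in $i$, a unit since $n$ is odd), and along diagonals, where $i+j=s$ forces $c=i+s$ (coefficient $1$ in $i$); hence $\chi(T_n)=n$. The same computation also pinpoints the obstruction: a linear colouring $\alpha i+\beta j$ is proper exactly when $\beta,\alpha,\alpha-\beta$ are all units modulo $n$, and for even $n$ the units are the odd residues, so $\alpha,\beta$ odd forces $\alpha-\beta$ even. No linear colouring can work, and the even case genuinely requires a non-linear idea.

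For even $n$ the lower bound $\chi(T_n)\ge n+2$, which rules out not only $n$ but also $n+1$ colours, is the delicate counting/parity step, and I would import it from \cite{2}. The matching upper bound is the heart of the argument. My plan is to start from the near-proper template $L_t=\{(i,j):2i+j\equiv t\}$, which colours rows and diagonals perfectly but collapses each column into $n/2$ monochromatic pairs $\{(i,j),(i+n/2,j)\}$, and then to repair the column defect while leaving rows and diagonals intact. A useful preliminary observation is that no two-region affine repair can succeed: splitting the rows into two blocks and recolouring the lower block by $2i+j+c$ forces $c$ to be odd in order to spread each column over all $n$ colours, while forcing the diagonals to track correctly forces $c\equiv 0$, a contradiction modulo $2$. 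This is exactly where the two extra colours must enter. I would therefore arrange the $n$ main classes so that the unavoidable residual conflicts are confined to an explicitly located, small set of cells whose conflict graph is bipartite, and then absorb that set into two further independent classes coloured $n+1$ and $n+2$.

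\textbf{Main obstacle.} The crux is engineering the main colouring so that its residual conflicts form a $2$-colourable (bipartite) set rather than being smeared along whole rows or columns, where the induced clique would make two extra colours insufficient. Once the conflict set is localised, verifying properness simultaneously in all three directions is routine, and it is best made transparent through the accompanying graphical presentation.
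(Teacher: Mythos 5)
Your odd case and your lower-bound material are sound. The coloring $c(i,j)=2i+j \bmod n$ is proper for odd $n$ by exactly the unit arguments you give, and it is essentially the paper's own coloring $j-i \bmod n$ in disguise (both reduce to $2$ being invertible modulo odd $n$). In fact your transversal-parity observation is stronger than you give it credit for: once you know every independent set (= partial transversal) has size at most $n-1$, you get $\chi(T_n)\ge n^2/(n-1)=n+1+\tfrac{1}{n-1}>n+1$, hence $\chi(T_n)\ge n+2$ by integrality — so the even lower bound need not be imported from \cite{2} at all; it follows from the very computation you already wrote down. (The paper, like you, simply cites \cite{2} for this bound.)

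The genuine gap is the even-case upper bound, which is the heart of the theorem and which your proposal does not prove. What you offer is a strategy — start from the template $2i+j \bmod n$, localize the column conflicts to a small set whose conflict graph is bipartite, and absorb that set into two extra classes — but no such set is exhibited, no coloring is written down, and your own ``Main obstacle'' paragraph concedes that the engineering step is unresolved. That step is not routine: it is precisely the content of the theorem, and the claim that ``once the conflict set is localised, verifying properness \ldots is routine'' assumes the construction rather than supplying it. For comparison, the paper resolves exactly this difficulty not by repairing a mod-$n$ template (your own parity observation shows such affine repairs cannot work modulo $n$) but by changing the modulus: append two virtual columns and set $c(i,j)=j-i \pmod{n+2}$ for $i\le n/2$ and $c(i,j)=j-i-1 \pmod{n+2}$ for $i>n/2$. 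Rows and columns are then automatically proper, because $T_n$ embeds in an $(n+2)\times(n+2)$ square colored by plain cyclic shifts; the only nontrivial check is the label (diagonal) constraint, which follows from a parity argument: for a fixed color, the labels met in the top half all have one parity, those in the bottom half have the opposite parity, and within each half the labels are pairwise distinct because they form an ordered sequence of at most $n/2$ values stepping by $2$ or $4$. Your instinct that the two extra colors must enter through a controlled, two-colorable residue is reasonable, but as it stands your proposal establishes $\chi(T_n)=n$ for odd $n$ and only $n+2\le\chi(T_n)\le$ (something unproven) for even $n$.
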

	
	\begin{proof}
		\begin{itemize}
		\item \emph{Odd Case:} Assume $n$ is \textbf{odd}. Clearly, $\chi(T_{n}) \geq n$.
		To find an $n$-coloring, we choose $n$ colors for cells of the first row. Then we shift colors for next rows and we are done.
		We choose colors $\{1,2,\ldots ,n\}$, then the color of the $(i,j)$th entry is $j-i \ (\text{mod} \ n)$ (Fig \ref{fig:3}). It is straight forward to see that this is indeed a coloring.
		
		\begin{figure}[H] \label{fig 3}
			\centering
			\begin{subfigure}{0.22\textwidth}
				\includegraphics[width=\linewidth]{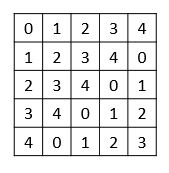}
				\caption{$T_{5}$, Cyclic Latin square of order 5} \label{fig:3a}
			\end{subfigure}
			\hspace{0.035\textwidth} 
			\begin{subfigure}{0.22\textwidth}
				\includegraphics[width=\linewidth]{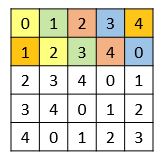}
				\caption{Coloring of $T_{5}$ by shifting colors} \label{fig:3b}
			\end{subfigure}
			\hspace{0.035\textwidth} 
			\begin{subfigure}{0.22\textwidth}
				\includegraphics[width=\linewidth]{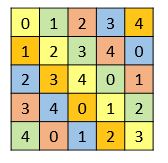}
				\caption{5-coloring of $T_{5}$} \label{fig:3c}
			\end{subfigure}
			\caption{Example of a cyclic Latin square of odd order $n$ and its $n$-coloring.} \label{fig:3}
		\end{figure} 
		
		\item \emph{Even Case:} Assume that $n$ is \textbf{even}. By the following Corollary $\chi(T_{n}) \geq n+2$.
		\begin{corollary}
			Let $G$ be an abelian group of order $n$. Then, $\chi(T_{G}) \geq n + 2$ if and only if $G$ has a unique element of order 2. Here, $T_{G}$ is the Latin square graph obtained from the Cayley table of $G$.
		\end{corollary}	
		This corollary was deduced from several papers in \cite{2}.
		
		\begin{itemize}
		\item Here, we introduce a \textbf{coloring} of $T_{n}$ with $n+2$ colors to show that $\chi(T_{n})= n+2$. First, we add two empty columns to the Latin square (Fig \ref{fig:4}). We choose $n+2$ colors for the first row and shift the colors for next rows, consecutively. The colors are shifted twice only for the $(\frac{n}{2}+1)$th row (Fig \ref{fig:4c}). We choose colors $\{1,2,\ldots ,n+2\}$, then the color of the $(i,j)$th cell is $j-i \ \left( \text{mod} \ n+2\right)$ in the first half $i \leq \frac{n}{2}$ and $j-i-1 \ \left( \text{mod} \ n+2\right)$ in the second half $i > \frac{n}{2}$.		
		\begin{figure}[H] \label{fig 4}
			\centering
			\begin{subfigure}{0.25\textwidth}
				\includegraphics[width=\linewidth]{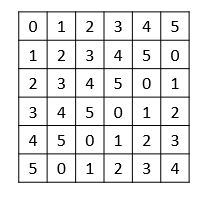}
				\caption{$T_{6}$, Cyclic Latin square of order 6} \label{fig:4a}
			\end{subfigure}
			\hspace*{\fill} 
			\begin{subfigure}{0.34\textwidth}
				\includegraphics[width=\linewidth]{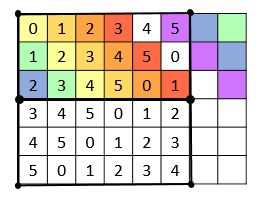}
				\caption{Coloring of the first half of $T_{6}$ by shifting colors} \label{fig:4b}
			\end{subfigure}
			\hspace*{\fill} 
			\begin{subfigure}{0.34\textwidth}
				\includegraphics[width=\linewidth]{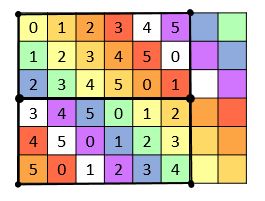}
				\caption{8-coloring of $T_{6}$} \label{fig:4c}
			\end{subfigure}
			\caption{Example of a cyclic Latin square of even order $n$ and its $(n+2)$-coloring.} \label{fig:4}
		\end{figure} 
				
		\item \textbf{This is indeed a coloring.} We need to show that color of the $(i,j)$th cell is different from its adjacent vertices. By construction, if two cells are in the same row, they have different colors. The same is true for vertices in the same column, since $T_{n}$ could be embedded in a square of dimension $n+2$ colored by simple shifts. Now we see that if two cells have the same color, they have different labels and the proof is finished.
		
		Fix a color $C$. We note that labels of color $C$ in each half are either all even or all odd since the cyclic Latin square has even order. If cells of color $C$ have even labels in the first half, they have odd labels in the second half and vice versa. So, two cells of color $C$ have different labels if one is in the first half and the other is in the second half. 
		
		To see that two cells of color $C$ in the same half have different colors, we use the cyclic structure of the Latin square (Fig's \ref{fig:5e} and \ref{fig:5f}). Without loss of generality, assume that cells with color $C$ have even labels in the first half. Then, starting from the first row, labels of these cells form an ordered sequence of length less than or equal to $\frac{n}{2}$ satisfying the order in Figure \ref{fig:5c}. Note that in some rows colored cells do not have labels and that after two empty labels in the sequence, entries continue from the second label in the order as illustrated in Figure \ref{fig:5e}. This order results from the fact that in each half labels of color $C$ differ by 2 in two consecutive rows and by 4 when there are two blank cells separating them in the sequence (Fig \ref{fig:5f}).
		 Since the ordered sequence is chosen in order from $\frac{n}{2}$ possible labels, it has distinct entries. Hence, labels of color $C$ are different in the first half and they are all even. The same is true for labels in the second half. They are odd and different.	
		 	\end{itemize}
		\end{itemize}	
\end{proof}
\begin{figure} \label{fig 5}
	\centering
\begin{subfigure}{0.45\textwidth}
	\includegraphics[width=\linewidth]{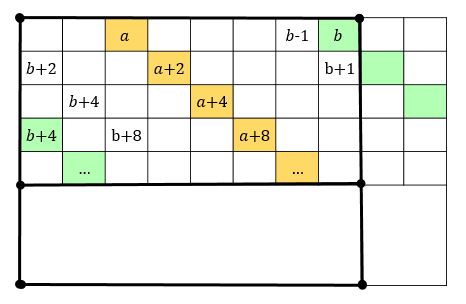}
	\caption{Labels of the same color differ by 2 in two consecutive rows and by 4 when there are two blank cells separating them in the sequence.} \label{fig:5e}
\end{subfigure}
\hspace{0.04\textwidth}
\begin{subfigure}{0.27\textwidth}
	\includegraphics[width=\linewidth]{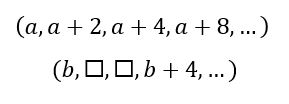}
	\caption{The sequence of labels in each half has length $\leq \frac{n}{2}$ because some colored cells are empty.} \label{fig:5f}
\end{subfigure}

	\begin{subfigure}{0.27\textwidth}
		\includegraphics[width=\linewidth]{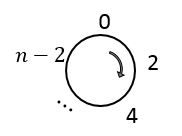}
		\caption{Order of even labels appearing in sequence of labels} \label{fig:5c}
	\end{subfigure}
	\hspace{0.04\textwidth}
	\begin{subfigure}{0.27\textwidth}
		\includegraphics[width=\linewidth]{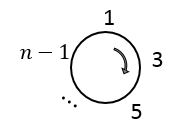}
		\caption{Order of odd labels appearing in sequence of labels} \label{fig:5d}
	\end{subfigure}
	\caption{Graphical proof of Theorem \ref{th}: Cells with the same color have different labels since even and odd labels appear in particular orders differing by 2.} \label{fig:5}
\end{figure} 
In the coloring stated for the proof of even cases, four colors, $(\frac{n}{2}, \frac{n}{2}+1,n+1, n+2)$ appear $n-1$ times in the coloring. These colors appear on entries $(\frac{n}{2}+1,n+2), (\frac{n}{2},n+1), (1,n+2)$ and $(n,n+1)$, respectively in the extra columns. The other colors appear exactly $n-2$ times. Hence, the coloring is \textit{equitable} (the cardinality of any two color classes differ by at most 1) as defined in \cite{4}. 

\section*{Acknowledgment}
The author is grateful to Professor E.S. Mahmoodian for his comments and reviewing the article. The author would also like to thank the anonymous reviewer whose detailed comments helped to improve the quality of the paper.

\end{document}